\patchcmd{\thebibliography}{\leftmargin\labelwidth}{\leftmargin\labelwidth\addtolength\itemsep{-0.1\baselineskip}}{}{}
\author{Zichao Dong\thanks{Extremal Combinatorics and Probability Group (ECOPRO), Institute for Basic Science (IBS), Daejeon, South Korea. Supported by the Institute for Basic Science (IBS-R029-C4). \texttt{zichao@ibs.re.kr}. } \and Zijian Xu\thanks{School of Mathematical Sciences, Peking University, Beijing 100871. \texttt{xuzijian2005@stu.pku.edu.cn}.}}
\title{Large grid subsets without many cospherical points}
\date{}
\newtheorem{theorem}{Theorem}
\newtheorem{corollary}[theorem]{Corollary}
\newtheorem{claim}[theorem]{Claim}
\newtheorem{problem}[theorem]{Problem}
\newenvironment{poc}{\begin{proof}[Proof of claim]}{\end{proof}}
\newcommand*{\eqdef}{\stackrel{\mbox{\normalfont\tiny def}}{=}} 
\newcommand*{\R}{\mathbb{R}}                                    
\newcommand*{\F}{\mathbb{F}}
\newcommand{\plane}{\begin{tikzpicture}[scale = 0.15]
    \clip (-0.5, -0.5) rectangle (1.1, 1.5);
    \draw (0, 0) rectangle (1, 1);
\end{tikzpicture}}
\newcommand{\sphere}{\begin{tikzpicture}[scale = 0.15]
    \clip (-0.5, -0.5) rectangle (1.1, 1.5);
    \draw (0.5, 0.5) circle (0.5);
\end{tikzpicture}}
\DeclareMathOperator{\ex}{ex}
\newcommand{\exl}[1]{\ex_{\plane}\bigl(#1\bigr)}
\newcommand{\exc}[1]{\ex_{\sphere}\bigl(#1\bigr)}
\newcommand{\exlc}[1]{\ex\bigl(#1\bigr)}
\begin{document}

\maketitle

\begin{abstract}
	Motivated by intuitions from projective algebraic geometry, we provide a novel construction of subsets of the $d$-dimensional grid $[n]^d$ of size $n - o(n)$ with no $d + 2$ points on a sphere or a hyperplane. For $d = 2$, this improves the previously best known lower bound of $n/4$ toward the Erdős–Purdy problem due to Thiele in 1995. For $d \ge 3$, this improves the recent $\Omega \bigl( n^{\frac{3}{d+1}-o(1)} \bigr)$ bound due to Suk and White, confirming their conjectured $\Omega \bigl( n^{\frac{d}{d+1}} \bigr)$ bound in a strong sense, and asymptotically resolves the generalized Erd\H{o}s--Purdy problem posed by Brass, Moser, and Pach. 
\end{abstract}

\section{Introduction} \label{sec:intro}

In the $n \times n$ grid $[n]^2$, what is the largest size of a subset without $3$ collinear points? This so-called \textsf{no-three-in-line} problem was originally raised by Dudeney \cite{dudeney} in 1917 in the $n = 8$ special case. Since one can pick at most two points on each line, there is an obvious $2n$ upper bound. Many authors (see e.g.~\cite{flammenkamp}) have published constructions towards this problem obtaining the bound $2n$ for $n \le 52$. 

In general, we denote by $\exl{[n]^2; \ell}$ the maximum size of a subset of $[n]^2$ which contains no $\ell$ collinear points. Then an obvious upper bound is that $\exl{[n]^2; k+1} \le kn$, and we have seen that $\exl{[n]^2, 3} = 2n$ for $n = 1, \dots, 52$. The best-known lower bound for the \textsf{no-three-in-line} problem is $\exl{[n]^2, 3} \ge \frac{3}{2}n - o(n)$ due to Hall et al.~\cite{hall_jackson_sudbery_wild}. This problem is both fascinating and enigmatic, and there remains a significant lack of consensus on what the answer is supposed to be: 
\vspace{-0.5em}
\begin{itemize}
    \item Green \cite[Problem 72]{green} believes that the answer should be approximately $\frac{3}{2}n$. 
    \vspace{-0.5em}
    \item Brass, Moser, and Pach \cite{brass_moser_pach} think that roughly $2n$ points could always be achievable. 
    \vspace{-0.5em}
    \item Guy and Kelly \cite{guy_kelly} conjecture a $1.87n$ answer based on some probabilistic evidence. 
\end{itemize}
\vspace{-0.5em}
Very recently, Kov\'{a}cs, Nagy, and Szab\'{o} \cite{kovacs_nagy_szabo} proved $\exl{[n]^2; k+1} = kn$ for all $k > 15\sqrt{n\log n}$. 

Erd\H{o}s and Purdy proposed a similar \textsf{no-four-on-a-circle} problem in 1981 (see \cite{guy}), asking for the largest size of a subset of $[n]^2$ containing no $4$ concyclic points. Here, lines are considered as circles of infinite radius and thus are included in this definition. Since every horizontal line intersects such a subset in at most $3$ points, we have a trivial upper bound $3n$. Thiele \cite{thiele_paper,thiele_thesis} improved this upper bound to $2.5n$, and he also provided a $0.25n$ lower bound via algebraic constructions. 

In this paper, we study the high dimension generalization of the \textsf{no-four-on-a-circle} problem: 

\begin{problem}[Brass--Moser--Pach {\cite[Problem 4 of Chapter 10.1]{brass_moser_pach}}] \label{prob:BMP}
    What is the maximum number of points that can be selected from the lattice cube $[n]^d$ with no $d + 2$ points on a plane or a sphere? 
\end{problem}

To clarify the terminologies \emph{sphere} and \emph{plane}, in Euclidean space $\R^d$ we refer to 
\vspace{-0.5em}
\begin{itemize}
    \item a \emph{plane} as a $(d-1)$-dimensional hyperplane, and a \emph{sphere} as a $(d-1)$-dimensional sphere. 
\end{itemize}
\vspace{-0.5em}
In this language, coplanar (resp.~cospherical) points are points lying on a fixed plane (resp.~sphere). Let $\exc{[n]^d; \ell}$ be the maximum size of a subset of $[n]^d$ with no $\ell$ cospherical points nor $\ell$ coplanar points. More generally, denote by $\exlc{[n]^d; p_{\plane}, s_{\sphere}}$ the maximum size of a subset of $[n]^d$ containing neither $p$ coplanar points nor $s$ cospherical points. Under the notations above, 
\vspace{-0.5em}
\begin{itemize}
    \item The \textsf{no-three-in-line} problem asks to determine $\exl{[n]^2; 3} = \exlc{[n]^2; 3_{\plane}, \infty_{\sphere}}$, and
    \vspace{-0.5em}
    \item the \textsf{no-four-on-a-circle} problem asks to determine $\exc{[n]^2; 4} = \exlc{[n]^2; 4_{\plane}, 4_{\sphere}}$. 
\end{itemize}
\vspace{-0.5em}
Thiele \cite{thiele_thesis} deduced the following bounds concerning the Erd\H{o}s--Purdy \textsf{no-four-on-a-circle} problem: 
\[
\frac{n}{4} < \exlc{[n]^2; 3_{\plane}, 4_{\sphere}} \le \exc{[n]^2; 4} \le \exlc{[n]^2; \infty_{\plane}, 4_{\sphere}} \le \frac{5n-3}{2}. 
\]
In higher dimensions, by partitioning the grid into horizontal planes one can deduce an easy linear upper bound $\exc{[n]^d; d+2} \le (d+1)n$. The lower bound direction seems to be essentially harder. Thiele \cite{thiele_thesis} established $\exc{[n]^d; d+2} \ge \Omega \bigl( n^{\frac{1}{d-1}} \bigr)$. Very recently, Suk and White \cite{suk_white} obtained an improved bound $\exc{[n]^d; d+2} \ge \Omega \bigl( n^{\frac{3}{d+1}-o(1)} \bigr)$, and conjectured that $\exc{[n]^d; d+2} \ge \Omega \bigl( n^{\frac{d}{d+1}} \bigr)$. 

\smallskip

Our main result shows that for each $d$, the function $\exc{[n]^d; d+2}$ always grows linearly in $n$. 

\begin{theorem} \label{thm:d+2_d+2}
    For every integer $d \ge 2$, we have $\exc{[n]^d; d+2} \ge n - o(n)$. 
\end{theorem}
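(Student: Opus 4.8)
The plan is to \emph{linearize} the cospherical/coplanar condition and then transport the problem to a finite field. Writing $\abs{P}^2 = P_1^2 + \dots + P_d^2$, associate to each point $P \in \R^d$ the vector $\phi(P) = \bigl(\abs{P}^2, P_1, \dots, P_d, 1\bigr) \in \R^{d+2}$. Since every sphere or hyperplane in $\R^d$ is the zero set of some $a\abs{P}^2 + b_1 P_1 + \dots + b_d P_d + c$ with $(a, b, c) \neq 0$, any set of $d+2$ points that is cospherical or coplanar has linearly dependent $\phi$-images, i.e.\ the $(d+2) \times (d+2)$ determinant $\det\bigl[\phi(P^{(1)}), \dots, \phi(P^{(d+2)})\bigr]$ vanishes. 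Thus it suffices to produce $n - o(n)$ points of $[n]^d$ whose $\phi$-images are in general linear position; projectively these images lie on the quadric $Q = \{\, y_0 y_{d+1} = y_1^2 + \dots + y_d^2 \,\} \subset \p^{d+1}$, and we want no $d+2$ of them on a hyperplane.

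The key move is to pass to $\F_p$. I would fix a large prime $p \le n$ with $p \equiv 1 \pmod 4$ (so that $\iota = \sqrt{-1} \in \F_p$); by prime gaps in this progression, $p = n - o(n)$. I look for points $P_x = \bigl(g_1(x), \dots, g_d(x)\bigr)$ indexed by $x \in \F_p$, where $g_1, \dots, g_d \in \F_p[x]$ and the coordinates are taken in $\{0, \dots, p-1\}$. Because reduction mod $p$ is a ring homomorphism commuting with squaring, $\phi(P_x) \equiv \bigl(h(x), g_1(x), \dots, g_d(x), 1\bigr) \pmod p$ with $h = g_1^2 + \dots + g_d^2$. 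If the $d+2$ functions $h, g_1, \dots, g_d, 1$ form a basis of the polynomials of degree $\le d+1$ over $\F_p$, then for distinct $x_1, \dots, x_{d+2}$ the determinant factors as
\[
\det\bigl[\phi(P_{x_k})\bigr]_k \equiv \det(L) \prod_{k < l} (x_l - x_k) \not\equiv 0 \pmod p,
\]
where $L$ is the invertible change-of-basis matrix to the monomials $1, x, \dots, x^{d+1}$. Any real cospherical/coplanar relation would force this integer determinant to vanish, hence to be $0$ modulo $p$ --- a contradiction. As the $P_x$ are pairwise distinct (the lifted curve is injective) and a translation carries $\{0, \dots, p-1\}^d$ into $[n]^d$ without affecting spheres or planes, this gives $p = n - o(n)$ admissible points.

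It remains to build the $g_i$, and here lies the crux. Over $\R$ this is impossible: $h = \sum g_i^2$ has degree $2\max_i \deg g_i \ge 2$, leaving the $g_i$ too few degrees to span a basis. The remedy is that over $\F_p$ with $\iota = \sqrt{-1}$ one can engineer cancellation in $h$ through the hyperbolic splitting $g^2 + \tilde g^2 = (g + \iota\tilde g)(g - \iota\tilde g)$ --- precisely the assertion that the lifted curve is a \emph{rational normal curve} lying on the split quadric $Q$. Concretely, for even $d = 2m$ I would set the hyperbolic coordinates $u_j = x^{\,j}$ and $v_j = \tfrac1m x^{\,d+1-j}$ for $j = 1, \dots, m$, so that $\sum_j u_j v_j = x^{d+1}$; recovering $g_{2j-1} = \tfrac12(u_j + v_j)$ and $g_{2j} = \tfrac1{2\iota}(u_j - v_j)$ yields $h = \sum g_i^2 = x^{d+1}$, while in the $u_j, v_j$ basis the functions $h, u_1, v_1, \dots, u_m, v_m, 1$ carry all distinct degrees $0, 1, \dots, d+1$ and hence form a basis. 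For odd $d$ a single square is left over; assigning it a middle degree and balancing the remaining products so they cancel (choosing coefficients with $\sum_j \alpha_j \beta_j = 0$) produces the analogous basis, with the few smallest cases verified directly.

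The main obstacle is exactly this explicit construction of $g_1, \dots, g_d$ --- equivalently, exhibiting a full-rank quadric of the same $\F_p$-type as $Q$ through the rational normal curve. The required degree cancellation in $\sum g_i^2$ has no real analogue and must be extracted from the arithmetic of $\F_p$; checking that the resulting functions genuinely form a basis (so that $\det L \neq 0$), and handling the even/odd parity split together with the small cases, is where the real effort goes. Once that is in place, the reduction modulo $p$ and the Vandermonde factorization close the argument routinely.
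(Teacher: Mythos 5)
Your proposal is correct up to one small, easily repaired case, and although it rests on the same core idea as the paper --- placing the points on a curve over $\F_p$ with $p \equiv 1 \pmod 4$ whose sum of squares degenerates because $\iota = \sqrt{-1}$ exists, i.e.\ a rational normal curve of degree $d+1$ on the sphere-quadric through the isotropic points at infinity --- your implementation is genuinely different and in several respects lighter. The paper uses a rational curve $t \mapsto \bigl(f_1(t)/h(t), \dots, f_d(t)/h(t)\bigr)$ with $f_1^2 + \dots + f_d^2 = gh$ and $\deg h = d+1$, builds $f_1, \dots, f_d, g, h$ by an interpolation argument (\Cref{thm:poly_nice}), bounds the intersections of the curve with each plane, with each sphere, and the number of self-intersections by three separate degree counts, and --- since it takes $p \ge n$ --- needs a first-moment averaging over random translations to fit $n - o(n)$ of its $p - d - 2$ points into $[n]^d$. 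You instead take the ``denominator $\equiv 1$'' instance of the same identity: a polynomial curve with $\sum_i g_i^2 = h$, where the hyperbolic splitting $g^2 + \tilde{g}^2 = (g + \iota\tilde{g})(g - \iota\tilde{g})$ lets the $g_i$ be explicit monomial combinations, so that $\{1, g_1, \dots, g_d, h\}$ visibly spans the polynomials of degree at most $d+1$ (all degrees $0, 1, \dots, d+1$ occur). Your single Vandermonde factorization $\det\bigl[\phi(P_{x_k})\bigr]_k = \det(L)\prod_{k<l}(x_l - x_k) \not\equiv 0 \pmod p$ then rules out coplanar $(d+2)$-tuples, cospherical $(d+2)$-tuples, and repeated points all at once, replacing the paper's three counts; and because you take $p \le n$ with $p = n - o(n)$, your $p$ integer points sit inside $[n]^d$ after one deterministic shift, with no averaging step and no lost points. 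This is a cleaner and shorter route to \Cref{thm:d+2_d+2}.

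Two caveats. First, your odd-$d$ recipe (give the leftover square the middle degree and cancel the hyperbolic products, $\sum_j \alpha_j \beta_j = 0$) breaks at $d = 3$: there is only one product, and cancelling it destroys independence. The fix is to \emph{not} cancel: take $g_d = x^{(d+1)/2}$ and products summing to a nonzero multiple of $x^{d+1}$, so that $h$ is a nonzero scalar times $x^{d+1}$ and the degrees $0, 1, \dots, d+1$ still all appear; e.g.\ for $d = 3$, take $g_1 = \tfrac12(x + x^3)$, $g_2 = \tfrac{1}{2\iota}(x - x^3)$, $g_3 = x^2$, giving $h = 2x^4$. This variant works uniformly for all odd $d$, so the ``smallest cases'' hedge is not even needed. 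Second, what the paper's heavier interpolation construction buys is the extra \textsf{vanish} property (vanishing linear coefficients of $f_1, \dots, f_d, h$), which is exactly what powers the Vieta argument behind \Cref{thm:d+1_d+2}; your monomial curve lacks it (the linear coefficient of your $g_1$ is $\tfrac12 \ne 0$), so your argument proves \Cref{thm:d+2_d+2} but would need modification to yield the stronger coplanarity bound of \Cref{thm:d+1_d+2}.
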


\Cref{thm:d+2_d+2} resolves the Suk--White conjecture \cite[Conjecture 1.2]{suk_white} in a strong form. Moreover, it provides an answer to \Cref{prob:BMP} which is asymptotically tight in $n$. It is also worth mentioning that \Cref{thm:d+2_d+2} significantly improves upon Thiele's lower bound on $\exc{[n^2]; 4}$ from 1995. 

Recall that Thiele \cite{thiele_thesis} indeed deduced that $\exlc{[n]^2; 3_{\plane}, 4_{\sphere}} > n/4$, implying $\exc{[n]^2; 4} > n/4$. With some extra work, we can also generalize this stronger result to higher dimensions. 

\begin{theorem} \label{thm:d+1_d+2}
    For every integer $d \ge 2$, we have $\exlc{[n]^d; (d+1)_{\plane}, (d+2)_{\sphere}} \ge \frac{n}{d+1} - o(n)$. 
\end{theorem}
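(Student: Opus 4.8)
The plan is to reduce \Cref{thm:d+1_d+2} to the construction underlying \Cref{thm:d+2_d+2} through the classical lift to a paraboloid, and then to upgrade the guarantee ``no $d+2$ coplanar points'' to the stronger ``no $d+1$ coplanar points'' at the cost of passing to a subset of relative density $\tfrac{1}{d+1}$. Consider the lift $\R^d \to \R^{d+1}$, $x \mapsto \hat{x} \eqdef (x, \lVert x\rVert^2)$. A sphere $\lVert x\rVert^2 + \langle b, x\rangle + c = 0$ in $\R^d$ is the trace on the paraboloid of a \emph{non-vertical} hyperplane (one in which the $x_{d+1}$-coefficient is nonzero), whereas a plane $\langle a, x\rangle = c$ is the trace of a \emph{vertical} hyperplane. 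Hence, writing $\hat{S}$ for the lift of $S \subseteq [n]^d$, the set $S$ has no $d+2$ cospherical points iff no $d+2$ points of $\hat{S}$ lie on a non-vertical hyperplane, and $S$ has no $d+1$ coplanar points iff no $d+1$ points of $\hat{S}$ lie on a vertical hyperplane. It therefore suffices to produce $S$ with $\lvert S\rvert \ge \tfrac{n}{d+1} - o(n)$ whose lift meets every non-vertical hyperplane in at most $d+1$ points and every vertical hyperplane in at most $d$ points.

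First I would feed in the construction behind \Cref{thm:d+2_d+2}, which furnishes a set $S_0 \subseteq [n]^d$ of size $n - o(n)$ whose lift $\hat{S}_0$ is in general position in $\R^{d+1}$, meaning that no $d+2$ of its points lie on any hyperplane. In particular $\hat{S}_0$ meets every non-vertical hyperplane in at most $d+1$ points, so $S_0$ already avoids $d+2$ cospherical points, and it meets every vertical hyperplane in at most $d+1$ points. The sole obstruction to \Cref{thm:d+1_d+2} is thus the existence of \emph{bad} planes: planes of $\R^d$ containing exactly $d+1$ points of $S_0$ (equivalently, vertical hyperplanes carrying $d+1$ points of $\hat{S}_0$). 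Since any subset of $S_0$ inherits the absence of $d+2$ cospherical points automatically, everything reduces to a purely combinatorial selection problem on $S_0$.

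The heart of the matter is to extract a subset $S \subseteq S_0$ with $\lvert S\rvert \ge \tfrac{1}{d+1}\lvert S_0\rvert - o(n) = \tfrac{n}{d+1} - o(n)$ meeting every bad plane in at most $d$ points. I would do this by $(d+1)$-colouring $S_0$ so that no bad plane is monochromatic and then retaining the largest colour class; within that class every plane then carries at most $d$ points, as required. To build and analyse the colouring I would exploit the explicit geometry of the construction: the points of $S_0$ lie essentially one per level on an algebraic curve $t \mapsto P(t)$, and a plane $\langle a, x\rangle = c$ meets the curve exactly at the real zeros of $\langle a, P(t)\rangle - c$, a function in the $(d+1)$-dimensional span $\langle 1, P_1, \dots, P_d\rangle$. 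Because the enlarged system $\langle 1, P_1, \dots, P_d, \lVert P\rVert^2\rangle$ is a Chebyshev (Haar) system of dimension $d+2$ --- precisely the feature that places $\hat{S}_0$ in general position --- every such function has at most $d+1$ real zeros, with equality exactly along the bad planes. The $1/(d+1)$ factor is then the price of deleting the single ``extra'' zero from each bad plane uniformly over the curve.

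The step I expect to be the main obstacle is precisely the construction of this colouring, i.e.\ verifying that a density-$\tfrac{1}{d+1}$ selection simultaneously breaks every bad plane. A naive periodic colouring of the ordered parameters, or a partition of the parameter interval into $d+1$ arcs, need not work, because a Chebyshev function realising its maximal $d+1$ zeros may concentrate them arbitrarily; the selection must instead be adapted to the interlacing and sign-change structure of the Chebyshev system $\langle 1, P_1, \dots, P_d\rangle$, so that each colour class becomes genuinely Chebyshev of dimension $d+1$ (no $d+1$ coplanar points) rather than merely a subsystem of the larger one. Carrying this out rigorously, while losing only $o(n)$ beyond the unavoidable factor $d+1$, is the crux; once it is in place the largest colour class yields \Cref{thm:d+1_d+2}, and specialising to $d = 2$ recovers and improves Thiele's bound from $\tfrac{n}{4}$ to $\tfrac{n}{3} - o(n)$.
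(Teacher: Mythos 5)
Your reduction is sound as far as it goes: lifting to the paraboloid, observing that any subset of $S_0$ automatically inherits the no-$(d+2)$-cospherical property, and noting that a $(d+1)$-colouring in which no ``bad'' plane is monochromatic would finish the proof by taking the largest colour class. But the entire content of \Cref{thm:d+1_d+2} lies in the step you defer, and the route you sketch for that step cannot work with the construction you are feeding in. The set $S_0$ from \Cref{thm:d+2_d+2} is not (a translate of) a real-analytic curve: it is the intersection of a translated rational curve $t \mapsto \bigl( f_1(t)/h(t), \dots, f_d(t)/h(t) \bigr)$ over the \emph{finite field} $\F_p$ with the embedded copy of $[n]^d$. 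Over $\F_p$ there is no order, hence no sign changes, no interlacing, and no Chebyshev/Haar structure; the general-position property of $S_0$ comes purely from degree counting ($\deg P \le d+1$ and $P \ne 0$), and the equality cases --- planes through exactly $d+1$ points --- genuinely occur, with no control on where their parameters sit. So the ``adapted colouring'' you hope to extract from the sign-change structure has no starting point, while a naive partition of the parameter set fails for exactly the reason you yourself note. In short, the proposal reduces the theorem to its only hard step and then stops; this is not a technical gap but the whole theorem.

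For comparison, the paper closes this gap by engineering the algebra so that one explicit ``colour class'' provably works. \Cref{thm:poly_nice} is proved with the extra property \textsf{vanish}: the polynomials $f_1, \dots, f_d, h$ all have vanishing linear coefficient. Consequently, for any plane $\{\lambda_1 x_1 + \dots + \lambda_d x_d + \lambda = 0\}$, the polynomial $P = \lambda_1 f_1 + \dots + \lambda_d f_d + \lambda h$ also satisfies $[t]P = 0$; if $P$ had $d+1$ distinct zeros $t_1, \dots, t_{d+1}$, then Vieta's formulas would force $t_1^{-1} + \dots + t_{d+1}^{-1} = 0$ in $\F_p$. The paper therefore restricts the parameter domain to those $t$ with $t^{-1} \in \bigl\{ \overline{1}, \dots, \overline{\lfloor (p-1)/(d+1) \rfloor} \bigr\}$: any $d+1$ such reciprocals sum to an integer strictly between $0$ and $p$, so the sum cannot vanish in $\F_p$, and hence every plane meets the restricted curve in at most $d$ points. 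This is, in effect, exactly one class of your $(d+1)$-colouring (colour each point by which interval $t^{-1}$ lies in), but the fact that no bad plane is monochromatic in that class is an \emph{arithmetic} consequence of the engineered vanishing coefficient, not a consequence of any ordering or interlacing on the curve. To complete your proposal you would need a substitute for this mechanism --- some structural handle on where the $d+1$ parameters of a bad plane can lie --- and that is precisely what the \textsf{vanish} property of \Cref{thm:poly_nice} supplies.
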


\paragraph{Proof overview and organization.} Thiele \cite{thiele_thesis} deduced $\exc{[n]^d; d+2} \ge \Omega \bigl( n^{\frac{1}{d-1}} \bigr)$ bounds via moment curves. Specifically, his constructions were based on $\bigl\{ (t, t^2, \dots, t^d) \bigr\} \subset \F_p^d$. People observed further \cite{roth,brass_knauer,lefmann,suk_white} that such constructions could give $\exlc{[n]^d, d+1, 2d} \ge \Omega(n)$ bounds. 

One might try to come up with other curves to achieve better lower bounds. Bezout's theorem from algebraic geometry shows us that a curve of degree $k$ intersects a typical sphere in $2k$ points (including projective infinite points). To avoid $d + 2$ cospherical points, the curve is supposed to be of degree at most $\frac{d+1}{2}$. However, when $d \ge 3$, this is too small for a curve to escape a single plane. 

To address this caveat, our strategy is to utilize infinite points. Projectively, every sphere passes through a same set of infinite points. If our selected curve goes through many of these infinite points as well, then the number of spherical intersections decreases drastically. This motivates us to analyze rational curves $\Bigl\{\bigl( \frac{f_1(t)}{h(t)}, \, \dots, \, \frac{f_d(t)}{h(t)} \bigr)\Bigr\} \subset \F_p^d$, where $f_1, \dots, f_d, h$ are well-behaved polynomials. 


\smallskip

We shall find desired polynomials in \Cref{sec:poly}, and prove \Cref{thm:d+2_d+2,thm:d+1_d+2} in \Cref{sec:proof}. 

\section{Constructing the polynomials} \label{sec:poly}

Throughout this section, we take $p$ as a fixed prime number with $p \equiv 1 \pmod 4$ and $p > (d+1)!$. Importantly, the square root of $-1$ exists in the $p$-element field $\F_p$. Take $\alpha \in \F_p$ with $\alpha^2 + 1 = 0$. 

In the univariate polynomial ring $\F_p[t]$, we call a polynomial $f$ \emph{nice} if the linear coefficient of $f$ vanishes. That is, a nice $f$ takes the form $a_0 + a_2 t^2 + a_3 t^3 + \cdots$, where $a_0, a_2, a_3, \dots \in \F_p$. 

\begin{theorem} \label{thm:poly_nice}
    Suppose $d \ge 2$ is an integer. Let $p$ be a prime with $p \equiv 1 \pmod 4$ and $p > (d+1)!$. Then there exist $d + 2$ polynomials $f_1, \dots, f_d, g, h \in \F_p[t]$ with the following properties. 
    \vspace{-0.5em}
    \begin{itemize}
        \item \emph{(\textsf{identity})} They satisfy the polynomial identity $f_1^2 + \dots + f_d^2 = gh$ in $\F_p[t]$. 
        \vspace{-0.5em}
        \item \emph{(\textsf{degree})} They have $(\deg f_1, \dots, \deg f_d, \deg g, \deg h) = (d, \dots, d, d - 1, d + 1)$. 
        \vspace{-0.5em}
        \item \emph{(\textsf{independence})} The $d+2$ polynomials $f_1, \dots, f_d, g, h$ are linearly independent. 
        \vspace{-0.5em}
        \item \emph{(\textsf{vanish})} The $d+1$ polynomials $f_1, \dots, f_d, h$ are nice. (Note that $g$ is excluded.) 
    \end{itemize}
    \vspace{-0.5em}
\end{theorem}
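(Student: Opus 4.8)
The plan is to reduce the construction to an interpolation problem driven by \emph{isotropic} data. Write $Q(x) = x_1^2 + \dots + x_d^2$ for the standard quadratic form on $\F_p^d$ and $B$ for its associated bilinear form; a nonzero vector $v$ is \emph{isotropic} if $Q(v) = 0$, which is possible precisely because $\alpha^2 = -1$ (e.g.\ $v = (1, \alpha, 0, \dots, 0)$). The point of the \textsf{identity} $f_1^2 + \dots + f_d^2 = gh$ is that every root $r$ of $h$ forces the value vector $\bigl(f_1(r), \dots, f_d(r)\bigr)$ to be isotropic. Conversely, if I prescribe isotropic value vectors at enough nodes and interpolate, the resulting $\sum_i f_i^2$ will be divisible by the corresponding product of linear factors, and I can take $g$ to be the quotient by $h$.

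Before interpolating I first pin down the shape of $h$. Since $f_1, \dots, f_d, h$ are nice and $\deg g = d-1 \le d+1$, all of $f_1, \dots, f_d, g, h$ lie in the $(d+2)$-dimensional space of polynomials of degree $\le d+1$, whereas the nice polynomials in that space span only a $(d+1)$-dimensional subspace (they omit the coordinate $t^1$). Hence \textsf{independence} forces $g$ to be non-nice, i.e.\ its linear coefficient satisfies $g_1 \ne 0$. Comparing $t^1$-coefficients in $\sum_i f_i^2 = gh$—the left side has none, being a sum of squares of nice polynomials—gives $g_1 h_0 = 0$, so $h_0 = 0$; combined with niceness $h_1 = 0$ this yields $t^2 \mid h$. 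I therefore set $h(t) = t^2 \prod_{j=1}^{d-1}(t - s_j)$ for distinct nonzero $s_j \in \F_p$ (available since $p > (d+1)!$), which is automatically nice of degree $d+1$.

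Now for the interpolation. Let $V = \mathrm{span}_{\F_p}\{1, t^2, t^3, \dots, t^d\}$ be the $d$-dimensional space of nice polynomials of degree $\le d$, and consider the evaluation map $V \to \F_p^d$, $f \mapsto \bigl(f(0), f(s_1), \dots, f(s_{d-1})\bigr)$. Because the distinguished node is $0$, expanding the associated generalized Vandermonde determinant along the row for $0$ reduces it to $\prod_j s_j^2$ times an ordinary Vandermonde in $s_1, \dots, s_{d-1}$, which is nonzero; thus evaluation is an isomorphism. I then fix nonzero isotropic vectors $v^{(0)}, v^{(1)}, \dots, v^{(d-1)} \in \F_p^d$ and let $f_i \in V$ be the unique polynomial interpolating the $i$-th coordinates $v^{(0)}_i, \dots, v^{(d-1)}_i$, so that the value vector at each node is the prescribed $v^{(j)}$. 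By construction $\sum_i f_i^2$ vanishes at each $s_j$ and, to order $\ge 2$ at $0$ (niceness kills the linear term while the chosen value is isotropic), hence $h \mid \sum_i f_i^2$ and I may define $g = \sum_i f_i^2 / h$, a polynomial of degree $2d - (d+1) = d-1$ provided $\deg \sum_i f_i^2 = 2d$.

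It remains to choose the isotropic data so that all non-degeneracy requirements hold simultaneously: each $\deg f_i = d$ (the $t^d$-coefficient of $f_i$, a fixed linear functional $\sum_j \lambda_j v^{(j)}_i$ of the data, is nonzero); $\deg \sum_i f_i^2 = 2d$ (equivalently the leading vector $w = \sum_j \lambda_j v^{(j)}$ is non-isotropic, and since the $v^{(j)}$ are isotropic one computes $Q(w) = \sum_{j \ne k} \lambda_j \lambda_k B(v^{(j)}, v^{(k)})$, not identically zero); the value vectors $v^{(0)}, \dots, v^{(d-1)}$ are linearly independent (possible because the isotropic cone spans $\F_p^d$ for $d \ge 2$), giving the \textsf{independence} of $f_1, \dots, f_d$ and, via the degree gaps and non-niceness, of $h$ and $g$ as well; and finally $g_1 \ne 0$. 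Each condition is the non-vanishing of an explicit polynomial in the coordinates of the $v^{(j)}$, which range over a product of isotropic cones; rationally parametrizing each cone through its rational point $(1, \alpha, 0, \dots, 0)$ converts these into nonzero polynomials of degree bounded in terms of $d$ in free variables, so Schwartz--Zippel together with $p > (d+1)!$ produces a common admissible choice over $\F_p$. The main obstacle is certifying that the last condition $g_1 \ne 0$ is not forced to fail; I would settle this by exhibiting one admissible choice with $g_1 \ne 0$ (as for $d = 2$, where $h = t^2(t-1)$ and the isotropic values $(1, \alpha)$ at $0$ and $(2, -2\alpha)$ at $1$ give $f_1 = 1 + t^2$, $f_2 = \alpha - 3\alpha t^2$, and $g = -8(t+1)$), which shows the corresponding coefficient polynomial is not identically zero.
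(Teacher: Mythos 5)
Your structural analysis is correct and your route is genuinely different in its mechanics from the paper's, even though both rest on the same core idea (isotropic value vectors force the node polynomial to divide $f_1^2+\dots+f_d^2$, which is why $\alpha^2=-1$ matters). You prescribe $h=t^2\prod_{j=1}^{d-1}(t-s_j)$ up front, interpolate inside the space of nice polynomials so that \textsf{vanish} is automatic, and recover $g$ as the quotient $\sum_i f_i^2/h$; the paper goes the other way around: it interpolates with the Lagrange basis at $d$ simple nodes using an explicit isotropic matrix $A$, takes $g$ to be one of the Lagrange basis polynomials $g_j$ (an explicit product of linear factors, hence of known degree and visibly non-nice), lets $h$ be the quotient, and only afterwards achieves niceness deterministically by replacing $f_i$ with $f_i+\mu_i g$ and $f_1$ with $f_1+(\mu_1+\nu t)g$. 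Your preliminary deductions (that \textsf{independence} plus \textsf{vanish} force $g$ to be non-nice, and that the identity then forces $t^2\mid h$) are correct and are a nice structural observation the paper never states.

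The genuine gap is the one you flag yourself, and it is not a formality. Your endgame is Schwartz--Zippel over a rational parametrization of the product of isotropic cones, and that argument produces nothing unless \emph{each} non-degeneracy condition is certified to be a not-identically-zero polynomial in the parameters. For $\deg f_i=d$, for $Q(w)\ne 0$, and for linear independence of $v^{(0)},\dots,v^{(d-1)}$ this is routine, but the decisive condition $[t]g\ne 0$ is not optional (by your own dimension count, \textsf{independence} fails without it, since $f_1,\dots,f_d,h$ all lie in the $(d+1)$-dimensional space of nice polynomials of degree at most $d+1$), and you certify it only for $d=2$ via the explicit example $h=t^2(t-1)$, $f_1=1+t^2$, $f_2=\alpha-3\alpha t^2$, $g=-8(t+1)$. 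For $d\ge 3$ no certificate is given, and none is obvious: your $g$ is a quotient of a generic sum of squares by a fixed polynomial, so its linear coefficient is an opaque quadratic form in the data, and a priori it could vanish identically on the admissible locus for some $d$ --- conceivably even because your additional design decision that $h$ split completely over $\F_p$ with simple nonzero roots (which the structural deduction does not force) is too restrictive. So as written the proof establishes the theorem only for $d=2$. Note how the paper sidesteps exactly this point: because its $g$ is an explicit product of linear factors, non-niceness of $g$ is immediate from $p>(d+1)!$, and that non-niceness is then \emph{used} (to solve uniquely for $\mu_1,\dots,\mu_d$ and $\nu$) rather than \emph{verified generically}. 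To close your gap you would need either an explicit admissible family of isotropic vectors for every $d$ together with a computation showing $[t]g\ne 0$, or a deterministic correction step in the spirit of the paper's $\mu_i$, $\nu$ trick that removes the need for genericity in this coordinate.
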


\noindent \textbf{Remark.} We shall make use of \textsf{vanish} only in the proof of \Cref{thm:d+1_d+2}. 

\begin{proof}
    We first construct $f_1, \dots, f_d, g, h$ satisfying \textsf{identity}, \textsf{degree}, and \textsf{independence}. 
    
    Suppose $\lambda_1, \dots, \lambda_d \in \F_p$ are distinct and set $g_j \eqdef (t - \lambda_1) \cdots \widehat{(t - \lambda_j)} \cdots (t - \lambda_d)$ for $j = 1, \dots, d$. Observe that $g_1, \dots, g_d$ are linearly independent polynomials of degree $d - 1$. 
    
    Let $A = (a_{i, j})$ be an $d \times d$ matrix. For $i = 1, \dots, d$, we define degree-$d$ polynomials
    \[
    f_i = \sum_{j=1}^d \frac{a_{i, j}}{g_j(\lambda_j)} \cdot g_j + \prod_{j=1}^d (t - \lambda_j). 
    \]
    Observe that $f_i(\lambda_j) = a_{i, j}$ holds for every $i \in [d]$ and every $j \in [d]$. 

    \begin{claim} \label{claim:g_indep}
        There exists $j \in [d]$ such that $g_j$ does not appear in the linear span of $f_1, \dots, f_d$. 
    \end{claim}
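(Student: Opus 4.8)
The plan is to run a pure dimension count inside the $(d+1)$-dimensional $\mathbb{F}_p$-vector space of polynomials of degree at most $d$. The governing observation is that every $f_i$ differs from the single fixed polynomial $P := \prod_{j=1}^d (t-\lambda_j)$ only by an element of $\mathrm{span}(g_1, \dots, g_d)$; since each $g_j$ has degree $d-1$ while $P$ is monic of degree $d$, each $f_i$ is itself monic of degree exactly $d$. Moreover the $d$ linearly independent polynomials $g_1, \dots, g_d$ span precisely the subspace $W$ of polynomials of degree at most $d-1$, and no $f_i$ lies in $W$.

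First I would argue by contradiction: suppose every $g_j$ lies in $V := \mathrm{span}_{\mathbb{F}_p}(f_1, \dots, f_d)$. Then $W = \mathrm{span}(g_1, \dots, g_d) \subseteq V$, so $\dim V \ge \dim W = d$. On the other hand $V$ is spanned by the $d$ polynomials $f_i$, so $\dim V \le d$. Hence $\dim V = d$, and since $W \subseteq V$ with $\dim W = \dim V$ we get $V = W$. But then $f_1 \in V = W$ would force $\deg f_1 \le d-1$, contradicting $\deg f_1 = d$. Therefore at least one $g_j$ escapes $V$, which is exactly the claim.

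An equivalent and perhaps more transparent phrasing uses the linear functional $\pi$ reading off the coefficient of $t^d$. Here $\pi(g_j) = 0$ for all $j$ while $\pi(f_i) = 1$, so $\pi$ restricts to a nonzero (hence surjective) functional on $V$, giving $\dim(V \cap \ker \pi) = \dim V - 1 \le d-1$; since all $g_j$ lie in $V \cap \ker \pi$, this again contradicts their linear independence. Either route sidesteps any computation with the matrix $A$ or with the scalars $g_j(\lambda_j)$.

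I do not anticipate a serious obstacle; the only point requiring care is the degree bookkeeping — that the correction term $\sum_{j=1}^d \frac{a_{i,j}}{g_j(\lambda_j)} g_j$ genuinely has degree at most $d-1$, so that the degree-$d$ leading term of each $f_i$ comes solely, and with coefficient $1$, from $P$ — together with confirming that $g_1, \dots, g_d$ indeed span all polynomials of degree at most $d-1$. Both facts are immediate from the degrees and independence of the $g_j$ recorded just above the claim, so I expect the argument to close in a few lines.
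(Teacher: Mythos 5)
Your proof is correct and takes essentially the same route as the paper's: both argue by contradiction that if every $g_j$ lay in $V=\mathrm{span}(f_1,\dots,f_d)$, then $V$ would coincide with the $d$-dimensional span of $g_1,\dots,g_d$ (the polynomials of degree at most $d-1$), forcing the degree-$d$ polynomial $f_1$ to have degree at most $d-1$. Your second phrasing via the leading-coefficient functional is only a cosmetic variant of the same dimension count.
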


    \begin{poc}
        Suppose to the contrary that each of $g_1, \dots, g_d$ can be written as a linear combination of $f_1, \dots, f_d$. We known that $g_1, \dots, g_d$ form a basis of polynomials in $\F_p[t]$ of degree at most $d - 1$. So do $f_1, \dots, f_d$. However, each of $f_1, \dots, f_d$ is of degree $d$, a contradiction. 
    \end{poc}

    \begin{claim} \label{claim:f_indep}
        If the matrix $A$ is invertible, then the polynomials $f_1, \dots, f_d$ are linearly independent. 
    \end{claim}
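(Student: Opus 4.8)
The plan is to reduce the statement to the elementary linear-algebra fact that an invertible matrix has trivial left kernel, exploiting the evaluation identity $f_i(\lambda_j) = a_{i, j}$ recorded just before the claim. The key conceptual point is that although each $f_i$ is a genuine degree-$d$ polynomial (and hence is \emph{not} determined by its values at the $d$ points $\lambda_1, \dots, \lambda_d$ alone), any \emph{linear dependence} among the $f_i$ must in particular produce a linear dependence among their value vectors $\bigl( f_i(\lambda_1), \dots, f_i(\lambda_d) \bigr) = (a_{i,1}, \dots, a_{i,d})$, that is, among the rows of $A$. Invertibility of $A$ then rules this out.

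Concretely, I would suppose that $\sum_{i=1}^d c_i f_i = 0$ in $\F_p[t]$ for some scalars $c_1, \dots, c_d \in \F_p$, and aim to show that all $c_i$ vanish. Evaluating this polynomial identity at $t = \lambda_j$ for each $j \in [d]$ and invoking $f_i(\lambda_j) = a_{i,j}$ yields
\[
0 = \sum_{i=1}^d c_i f_i(\lambda_j) = \sum_{i=1}^d c_i a_{i,j} \qquad \text{for every } j \in [d}.
\]
These $d$ scalar equations say precisely that the row vector $(c_1, \dots, c_d)$ lies in the left kernel of $A$, equivalently that $A^{\top}$ annihilates the column vector $(c_1, \dots, c_d)^{\top}$. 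Since $A$ is invertible, so is $A^{\top}$, and therefore $(c_1, \dots, c_d) = 0$. Thus the only linear combination of $f_1, \dots, f_d$ equal to the zero polynomial is the trivial one, which is exactly the asserted linear independence.

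I expect no substantial obstacle here: once the evaluation identity $f_i(\lambda_j) = a_{i,j}$ is in hand (which holds because the $\prod_{j} (t - \lambda_j)$ summand and all off-diagonal contributions $g_j(\lambda_k)$ with $j \neq k$ vanish at $t = \lambda_k$), the claim follows immediately. The only step demanding slight care is the bookkeeping of which index of $A$ is the row and which is the column, so that ``invertible'' is applied on the correct side; this is harmless, since $A$ is invertible if and only if $A^{\top}$ is.
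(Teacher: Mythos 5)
Your proof is correct and follows essentially the same route as the paper: assume a nontrivial dependence $\sum_i c_i f_i = 0$, evaluate at $\lambda_1, \dots, \lambda_d$ using $f_i(\lambda_j) = a_{i,j}$, and conclude that the rows of $A$ would be linearly dependent, contradicting invertibility. (Minor note: your displayed equation contains a typo, ``$j \in [d\}$'' with a mismatched bracket, but the argument itself is exactly the paper's.)
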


    \begin{poc}
        Assume for the sake of contradiction that there exist coefficients $c_1, \dots, c_d$ that are not all zero with $c_1 f_1 + \dots + c_d f_d = 0$. Evaluating at $\lambda_1, \dots, \lambda_d$, it follows that $c_1 a_{1, j} + \dots + c_d a_{d, j}$ holds for every $j \in [d]$. This implies that the row vectors of $A$ are linearly dependent with respect to coefficients $c_1, \dots, c_d$, contradicting the assumption that $A$ is invertible. 
    \end{poc}

    Set $\lambda_i \eqdef i - 1$ in $\F_p$ for $i = 1, \dots, d$. Take $\alpha \in \F_p$ with $\alpha^2 + 1 = 0$ and specify the matrix $A$ as
    \[
    a_{i, j} \eqdef \begin{cases}
        1 \qquad &\text{if $i = j$}, \\[-0.5em]
        \alpha \qquad &\text{if $i = j + 1$}, \\[-0.5em]
        \pm \alpha \qquad &\text{if $(i, j) = (1, d)$}, \\[-0.5em]
        0 \qquad &\text{otherwise}. 
    \end{cases}
    \]
    Here the ``$a_{1, d} = \pm \alpha$'' entry is chosen so that $A$ is invertible. In fact, it is not hard to compute that $A$ has determinant $1 \pm \alpha^d$. Since $1 + \alpha^d = 1 - \alpha^d = 0$ cannot happen in $\F_p$, we may always find a legal choice of $a_{1, d}$. As an example, we write out the matrix $A$ below in the $d = 6$ case: 
    \[
    A = \begin{pmatrix}
        1 &\!\! 0 &\!\! 0 &\!\! 0 &\!\! 0 &\!\! \alpha \\[-0.5em]
        \alpha &\!\! 1 &\!\! 0 &\!\! 0 &\!\! 0 &\!\! 0 \\[-0.5em]
        0 &\!\! \alpha &\!\! 1 &\!\! 0 &\!\! 0 &\!\! 0 \\[-0.5em]
        0 &\!\! 0 &\!\! \alpha &\!\! 1 &\!\! 0 &\!\! 0 \\[-0.5em]
        0 &\!\! 0 &\!\! 0 &\!\! \alpha &\!\! 1 &\!\! 0 \\[-0.5em]
        0 &\!\! 0 &\!\! 0 &\!\! 0 &\!\! \alpha &\!\! 1
    \end{pmatrix}. 
    \]
    It is important that $\lambda_1 = 0$ and the sum of squares of entries in each column of $A$ equals $0$. 
    
    \begin{claim} \label{claim:division}
        The polynomial $(t-\lambda_1) \cdots (t-\lambda_d)$ divides $f_1^2 + \cdots + f_d^2$. 
    \end{claim}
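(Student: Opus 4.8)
The plan is to reduce the divisibility statement to a vanishing condition at the roots $\lambda_1, \dots, \lambda_d$. Since these $d$ elements are distinct in $\F_p$, the product $(t - \lambda_1) \cdots (t - \lambda_d)$ has exactly them as its simple roots, and hence a polynomial $P \in \F_p[t]$ is divisible by it if and only if $P(\lambda_j) = 0$ for every $j \in [d]$. I would apply this criterion to $P \eqdef f_1^2 + \cdots + f_d^2$.

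The key step is to evaluate $P$ at each $\lambda_j$. Using the already-established interpolation identity $f_i(\lambda_j) = a_{i, j}$, we obtain
\[
P(\lambda_j) = \sum_{i=1}^d f_i(\lambda_j)^2 = \sum_{i=1}^d a_{i, j}^2,
\]
so the whole claim collapses to verifying that every column of $A$ has vanishing sum of squares. This is precisely the property flagged just before the claim. Concretely, in column $j$ with $1 \le j \le d - 1$ the only nonzero entries are $a_{j, j} = 1$ and $a_{j+1, j} = \alpha$, while in column $d$ they are $a_{d, d} = 1$ and $a_{1, d} = \pm \alpha$; in either case the sum of squares equals $1 + \alpha^2 = 0$ by the choice $\alpha^2 + 1 = 0$.

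I therefore expect no genuine obstacle here: all the difficulty was front-loaded into designing $A$ so that each column realizes a ``two squares summing to zero'' pattern, which is available exactly because $\sqrt{-1} \in \F_p$. Once the interpolation property $f_i(\lambda_j) = a_{i, j}$ is in hand, the proof is a direct computation showing $P(\lambda_j) = 0$ for all $j \in [d]$, which by the divisibility criterion yields that $(t - \lambda_1) \cdots (t - \lambda_d)$ divides $f_1^2 + \cdots + f_d^2$.
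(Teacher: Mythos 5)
Your proof is correct and matches the paper's argument exactly: the paper likewise observes that $f_1^2(\lambda_k) + \cdots + f_d^2(\lambda_k) = 0$ for all $k \in [d]$ (via $f_i(\lambda_j) = a_{i,j}$ and the vanishing column sums of squares of $A$), and concludes divisibility from the distinctness of the $\lambda_j$. You have simply spelled out the details that the paper's one-line proof leaves implicit.
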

    
    \begin{poc}
        This is true because $f_1^2(\lambda_k) + \dots + f_d^2(\lambda_k) = 0$ holds for all $k \in [d]$. 
    \end{poc}
    
    Pick $g \eqdef g_j$ as in \Cref{claim:g_indep}. Then \Cref{claim:division} implies that $g$ divides $f_1^2 + \dots + f_d^2$, and we set $h$ as the quotient, implying \textsf{identity}. Since the leading coefficient of $f_1^2 + \dots + f_d^2$ is $d \ne 0$ in $\F_p$, we have 
    \[
    \deg(f_1^2 + \dots + f_d^2) = 2d, \qquad \deg(g) = d-1, 
    \]
    and so $\deg(h) = d+1$, implying \textsf{degree}. Since $h$ is is of higher degree than $f_1, \dots, f_d, g$, evidently $h$ is linearly independent on $f_1, \dots, f_d, g$. So, \textsf{independence} follows from \Cref{claim:f_indep} and \Cref{claim:g_indep}. 

    \smallskip
    We then fix $g$ and tweak $f_1, \dots, f_d$ to fulfill \textsf{vanish}. We sketch the strategy below: 
    \vspace{-0.5em}
    \begin{itemize}
        \item Replace $f_i$ by $f_i + \mu_i g$, where $\mu_i$ is appropriately chosen so that every $f_i + \mu_i g$ is nice. 
        \vspace{-0.5em}
        \item Replace $f_1$ by $f_1 + \nu x g$, where $\nu$ is appropriately chosen so that the resulted $h$ is nice. 
    \end{itemize}
    \vspace{-0.5em}
    To achieve this, the crucial properties of $g$ we require are that $g$ is not nice and that $g(0) = 0$. From the assumption $p > (d+1)!$ we deduce that each of $g_1, \dots, g_d$ is not nice, and so $g$ is not nice. To further enforce $g(0) = 0$, we need the following stronger version of \Cref{claim:g_indep} under our specific $A$. 

    \begin{claim} \label{g_indep+}
        There exists $j \in [d] \setminus \{1\}$ such that $g_j$ does not appear in the linear span of $f_1, \dots, f_d$. 
    \end{claim}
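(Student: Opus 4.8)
The plan is to argue by contradiction: suppose that \emph{every} $g_j$ with $j \in [d] \setminus \{1\}$ lies in the span $W \eqdef \langle f_1, \dots, f_d \rangle$, and derive a contradiction from the structure of the first column of $A$. The first step is to pin down the subspace $V \eqdef \langle g_2, \dots, g_d \rangle$ explicitly. Since $g_1, \dots, g_d$ form a basis of the space of polynomials of degree at most $d-1$, and since $g_j(\lambda_k) = 0$ for $k \ne j$ while $g_j(\lambda_j) \ne 0$, a polynomial $q$ of degree $\le d-1$ expands as $q = \sum_j \frac{q(\lambda_j)}{g_j(\lambda_j)} g_j$; hence $q \in V$ exactly when its $g_1$-coefficient $q(\lambda_1)/g_1(\lambda_1)$ vanishes, i.e.\ when $q(\lambda_1) = 0$. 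Because $\lambda_1 = 0$, this gives the clean description
\[
V = \bigl\{ q \in \F_p[t] : \deg q \le d - 1, \ q(0) = 0 \bigr\}.
\]
Our contradiction hypothesis says precisely that $V \subseteq W$.

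Next I would introduce two linear functionals on $W$: the evaluation functional $L(q) \eqdef q(0)$ and the leading-coefficient functional $M(q) \eqdef \bigl[\text{coefficient of } t^d \text{ in } q\bigr]$. Every $f_i$ equals $\prod_{j=1}^d (t - \lambda_j)$ plus a polynomial of degree at most $d-1$, and the product is monic of degree $d$, so $M(f_i) = 1$ for all $i$. In particular $M \not\equiv 0$, so $\ker M$ is a hyperplane of $W$. Elements of $V$ have degree at most $d - 1$, so $M$ annihilates $V$, giving $V \subseteq \ker M \subseteq W$. By \Cref{claim:f_indep} the matrix $A$ is invertible and hence $\dim W = d$, so $\dim \ker M = d - 1 = \dim V$; therefore $\ker M = V$.

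Finally I would exploit this equality. Since $L$ vanishes on $V = \ker M$ and $M \ne 0$, standard linear algebra forces $L = c\,M$ on $W$ for some scalar $c \in \F_p$. Evaluating on each basis vector gives $f_i(0) = c \cdot M(f_i) = c$, a constant independent of $i$. But $f_i(0) = f_i(\lambda_1) = a_{i,1}$, so the entire first column of $A$ would have to be constant. This contradicts the explicit form of the first column, namely $(1, \alpha, 0, \dots, 0)^{\top}$, which is non-constant: its first two entries $1$ and $\alpha$ differ because $\alpha^2 = -1$ and $p \ne 2$ preclude $\alpha = 1$ (and for $d \ge 3$ the third entry $0$ provides a second discrepancy). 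This contradiction completes the proof.

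The main obstacle, and the step where the real work lies, is the identification of $V$ with the hyperplane ``$q(0) = 0$'' together with the observation that the two natural functionals $L$ and $M$ on $W$ must then be proportional; everything afterward is a short computation. It is worth emphasizing that the argument hinges on $\lambda_1 = 0$ (to make $V$ the zero-evaluation hyperplane) and on the first column of $A$ being non-constant, both of which are exactly the structural features the authors flagged as ``important.''
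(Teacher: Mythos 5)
Your proof is correct, but it takes a genuinely different route from the paper's. The paper argues by reduction to \Cref{claim:g_indep}: assuming $g_2, \dots, g_d$ all lie in $W = \langle f_1, \dots, f_d \rangle$, it uses the explicit bidiagonal structure of $A$ --- the expansion of $f_1$ in the basis $g_1, \dots, g_d$ involves $g_1$ and only one other $g_j$ --- to solve for $g_1$, concluding that $g_1 \in W$ too, so that \emph{all} of $g_1, \dots, g_d$ lie in $W$, contradicting \Cref{claim:g_indep}. (As written, the paper's expansion of $f_1$ omits the monic degree-$d$ term $\prod_{j}(t - \lambda_j)$; a careful version of that argument must first cancel this term, e.g.\ by subtracting two of the $f_i$'s, so your proof incidentally sidesteps a small imprecision in the original.) You never invoke \Cref{claim:g_indep}; instead you run a self-contained duality and dimension count: $V = \langle g_2, \dots, g_d \rangle$ is exactly the hyperplane $\{\deg q \le d-1, \ q(0) = 0\}$ (this is where $\lambda_1 = 0$ enters), so if $V \subseteq W$ then $V$ must coincide with the kernel of the leading-coefficient functional $M$ on the $d$-dimensional space $W$, forcing the evaluation functional $L(q) = q(0)$ to be a scalar multiple of $M$, i.e.\ forcing the first column of $A$ to be constant --- false, since that column is $(1, \alpha, 0, \dots, 0)^{\top}$ and $\alpha \ne 1$. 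The paper's argument buys brevity, at the cost of leaning on the specific form of $A$ and on the earlier claim; yours buys robustness and generality --- it works verbatim for any invertible $A$ with non-constant first column --- and it isolates precisely the two structural facts ($\lambda_1 = 0$ and the non-constant column) that the authors flagged as important. One small phrasing correction: \Cref{claim:f_indep} does not assert that $A$ is invertible; invertibility comes from the sign choice making $\det A = 1 \pm \alpha^d \ne 0$, and \Cref{claim:f_indep} then converts this into $\dim W = d$, which is the form in which you actually use it.
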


    \begin{poc}
        Suppose to the contrary that each of $g_1, \dots, g_d$ can be written as a linear combination of $f_1, \dots, f_d$. Our choice of $A$ implies $f_1 = \frac{1}{g_1(\lambda_1)} \cdot g_1 + \frac{\alpha}{g_2(\lambda_2)} \cdot g_2$, and so $g_1$ can also be written as a linear combination of $f_1, \dots, f_d$. However, this contradicts \Cref{claim:g_indep}. 
    \end{poc}

    Our choice of $\lambda_1, \dots, \lambda_d$ implies that $g_2(0) = \dots = g_d(0) = 0$ in $\F_p$. Thanks to \Cref{claim:g_indep}, we can choose $g$ to be one of $g_2, \dots, g_d$ so that $g(0) = 0$ without spoiling \textsf{identity}, \textsf{degree}, and \textsf{independence}. 
    
    Since $g$ is not nice, the linear coefficient of $g$ is nonzero in $\F_p$. It follows that for each $i = 1, \dots, d$, there exists a unique $\mu_i \in \F_p$ such that the linear coefficient of $f_i + \mu_i g$ vanishes. Set 
    \[
    \widetilde{f}_1 \eqdef f_1 + (\mu_1 + \nu t)g, \quad \widetilde{f}_2 \eqdef f_2 + \mu_2 g, \quad \dots, \quad \widetilde{f}_d \eqdef f_d + \mu_d g, \quad \widetilde{h} \eqdef \frac{\widetilde{f}_1^2 + \dots + \widetilde{f}_d^2}{g}. 
    \]
    Then $\widetilde{f}_1, \dots, \widetilde{f}_d$ are nice while \textsf{identity}, \textsf{degree}, and \textsf{independence} are left intact. It remains to verify that there is an appropriate choice of $\nu$ to make $\widetilde{h}$ nice. We compute
    \begin{equation} \label{eq:h_tilde}
    \widetilde{h} = h + \sum_{i=1}^d \mu_i (2f_i + \mu_i g) + 2(f_1 + \mu_1 g)\nu t + \nu^2 t^2 g. 
    \end{equation}
    Denote by $[t]f$ the linear coefficient of polynomial $f$. Since $g(0) = 0$ and 
    \[
    g_2(0) = 0 \implies f_1(0) = \frac{1}{g_1(\lambda_1)} \cdot g_1(0) + \frac{\alpha}{g_2(\lambda_2)} \cdot g_2(0) = \frac{1}{g_1(0)} \cdot g_1(0) + \frac{\alpha}{g_2(1)} \cdot g_2(0) = 1, 
    \]
    from \eqref{eq:h_tilde} we deduce that 
    \[
    [t] \widetilde{h} = [t] \biggl( h + \sum_{i=1}^d \mu_i (2f_i + \mu_i g) \biggr) + 2 \bigl( f_1(0) + \mu_1 g(0) \bigr) \nu = 2\nu + [t] \biggl( h + \sum_{i=1}^d \mu_i (2f_i + \mu_i g) \biggr). 
    \]
    It follows that $[t]\widetilde{h}$ is mapped surjectively onto $\F_p$ as $\nu$ varies. In particular, there is a unique choice of $\nu$ such that $[t]\widetilde{h} = 0$, implying \textsf{vanish}. We thus conclude the proof of \Cref{thm:poly_nice}. 
\end{proof}

\section{Proofs of \texorpdfstring{\Cref{thm:d+2_d+2,thm:d+1_d+2}}{Theorems 1 and 2}} \label{sec:proof}

We need the prime number distribution theorem for arithmetic progressions (see e.g.~\cite{apostol,page}). 

\begin{theorem}
    Let $\pi(N; k, \ell)$ be the number of primes $p$ with $p \le N$ and $p \equiv k \pmod {\ell}$. Then
    \[
    \biggl| \pi(N; k, \ell) - \frac{1}{\phi(\ell)} \int_2^x \frac{dt}{\log t} \biggr| = O \biggl( \frac{N}{\exp \bigl( c\sqrt{\log N}\bigr)} \biggr) \qquad \text{as $N \to \infty$}, 
    \]
    where $\phi$ is the Euler totient function and $c$ is an absolute positive constant. 
\end{theorem}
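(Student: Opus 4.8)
The stated estimate is the prime number theorem for arithmetic progressions with a fixed modulus $\ell$, in the de la Vallée Poussin–Page form, and I would obtain it through the standard analytic machinery of Dirichlet $L$-functions. The plan is to first replace the prime-counting function by the weighted Chebyshev function $\psi(x; k, \ell) = \sum_{n \le x,\, n \equiv k \,(\ell)} \Lambda(n)$, where $\Lambda$ denotes the von Mangoldt function; the passage back to $\pi(N; k, \ell)$ with main term $\frac{1}{\phi(\ell)} \int_2^x \frac{dt}{\log t}$ is a routine partial summation that inflates the error only by a lower-order factor. The essential step is to detect the congruence $n \equiv k \pmod \ell$ via the orthogonality of Dirichlet characters modulo $\ell$, writing
\[
\psi(x; k, \ell) = \frac{1}{\phi(\ell)} \sum_{\chi \bmod \ell} \overline{\chi(k)} \, \psi(x, \chi), \qquad \psi(x, \chi) = \sum_{n \le x} \Lambda(n) \chi(n).
\]
The principal character $\chi_0$ contributes $\frac{1}{\phi(\ell)} \psi(x)$, which the ordinary prime number theorem already controls, so the whole problem reduces to bounding $\psi(x, \chi)$ for the $\phi(\ell) - 1$ non-principal characters.

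For each non-principal $\chi$ I would apply Perron's formula to $-L'(s, \chi)/L(s, \chi)$ and shift the contour leftward. This uses the analytic continuation and functional equation of $L(s, \chi)$, and the residue calculation produces the explicit formula, schematically
\[
\psi(x, \chi) = -\sum_{\rho} \frac{x^{\rho}}{\rho} + (\text{lower-order terms}),
\]
a sum over the non-trivial zeros $\rho$ of $L(s, \chi)$. Consequently the size of the error is dictated entirely by how far these zeros are pushed away from the line $\operatorname{Re}(s) = 1$.

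The crux, and the main obstacle, is establishing a zero-free region of de la Vallée Poussin type: an absolute constant $c_0 > 0$ with $L(s, \chi) \ne 0$ throughout $\operatorname{Re}(s) > 1 - c_0 / \log\bigl( \ell (|\operatorname{Im}(s)| + 2) \bigr)$. I would derive this by the non-negativity trick based on $3 + 4\cos\theta + \cos 2\theta \ge 0$ applied to $\log$ of a suitable product of $L$-functions. The genuine subtlety arises for real (quadratic) characters, where this argument alone cannot rule out a single real \emph{exceptional} (Siegel) zero near $s = 1$; but since the modulus $\ell$ is fixed here, any such zero is bounded away from $1$ by a constant depending only on $\ell$ and is thus harmlessly absorbed into the implied constant. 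Finally, inserting this zero-free region into the explicit formula and truncating the contour at height $T$, one balances the zeros' contribution $x \exp\bigl(-c_0 \log x / \log T\bigr)$ against the truncation error of order $x (\log x)^2 / T$; the choice $\log T \asymp \sqrt{\log x}$ equalizes the two and yields the savings $\exp\bigl(-c \sqrt{\log x}\bigr)$. Summing the $O(\phi(\ell))$ character contributions and converting $\psi$ back to $\pi$ by partial summation completes the argument.
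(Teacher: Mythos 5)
The paper does not prove this statement at all: it is quoted as a classical result --- the prime number theorem for arithmetic progressions with the de la Vall\'{e}e Poussin--Page error term --- with a pointer to the literature (Apostol, Page), so there is no internal proof to compare yours against. Your outline is exactly the standard argument that appears in those cited sources: reduction to the Chebyshev function $\psi(x;k,\ell)$ with partial summation at the end, detection of the congruence class by orthogonality of Dirichlet characters, Perron's formula and contour shifting to get the explicit formula $\psi(x,\chi) = -\sum_{\rho} x^{\rho}/\rho + \cdots$, the classical zero-free region from the $3+4\cos\theta+\cos 2\theta \ge 0$ device, and the balancing choice $\log T \asymp \sqrt{\log x}$ that produces the saving $\exp\bigl(-c\sqrt{\log x}\bigr)$. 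Your handling of the one genuinely delicate point is also correct: since the modulus $\ell$ is fixed, any real zero $\beta$ of a quadratic character $L$-function satisfies $1-\beta \ge c(\ell) > 0$ (there are finitely many characters, each with finitely many real zeros, and $L(1,\chi)\ne 0$), so its contribution $x^{\beta}/\beta = O\bigl(x^{1-c(\ell)}\bigr)$ is negligible against $x/\exp\bigl(c\sqrt{\log x}\bigr)$. This is precisely why the exponent constant $c$ can be taken absolute while the implied $O$-constant may depend on $\ell$, which is consistent with the statement and with its only use in the paper, where $\ell = 4$ and $k = 1$.
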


\begin{corollary} \label{coro:prime}
    As $n \to \infty$, the smallest prime $p \ge n$ with $p \equiv 1 \pmod 4$ satisfies
    \[
    p = n + O \biggl( \frac{n \log n}{\exp \bigl( c\sqrt{\log n}\bigr)} \biggr) = n + o(n). 
    \]
\end{corollary}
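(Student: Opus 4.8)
The plan is to invoke the prime number theorem for arithmetic progressions with modulus $\ell = 4$, so that $\phi(4) = 2$ and the relevant residue class is $k = 1$, and to count the primes $p \equiv 1 \pmod 4$ that lie in a short interval $[n, n + \Delta]$. As soon as this count is shown to be strictly positive, such a prime exists at or below $n + \Delta$, which bounds the smallest prime $p \ge n$ with $p \equiv 1 \pmod 4$ from above by $n + \Delta$. The whole task thus reduces to choosing $\Delta = o(n)$ as small as the theorem permits while still forcing a positive count.

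Writing $\mathrm{Li}(x) \eqdef \int_2^x \frac{dt}{\log t}$ and applying the theorem at both endpoints gives
\[
\pi(n + \Delta; 1, 4) - \pi(n; 1, 4) = \tfrac{1}{2}\bigl( \mathrm{Li}(n + \Delta) - \mathrm{Li}(n) \bigr) + O\biggl( \frac{n + \Delta}{\exp\bigl(c\sqrt{\log(n + \Delta)}\bigr)} \biggr) + O\biggl( \frac{n}{\exp\bigl(c\sqrt{\log n}\bigr)} \biggr).
\]
The main term is $\tfrac{1}{2}\int_n^{n + \Delta} \frac{dt}{\log t}$, which, because $\log t$ varies negligibly over an interval of length $\Delta = o(n)$, lies between $\frac{\Delta}{2 \log(n + \Delta)}$ and $\frac{\Delta}{2 \log n}$ and hence is of order $\frac{\Delta}{2 \log n}$. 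For the error, the bound $\Delta = o(n)$ forces $n + \Delta \le 2n$ for large $n$, and since $\exp\bigl( c\sqrt{\log 2n} \bigr) \ge \exp\bigl( c\sqrt{\log n} \bigr)$ both error terms collapse into a single $O\bigl( n / \exp(c\sqrt{\log n}) \bigr)$.

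It then remains only to make the main term dominate. Taking $\Delta \eqdef C \cdot \frac{n \log n}{\exp(c\sqrt{\log n})}$ for a sufficiently large absolute constant $C$, the main term becomes of order $C \cdot \frac{n}{\exp(c\sqrt{\log n})}$, which exceeds the total error once $C$ is larger than the implied constant of the theorem. The count is therefore positive, the desired prime lies in $[n, n + \Delta]$, and together with the trivial bound $p \ge n$ this yields $p = n + O\bigl( \frac{n \log n}{\exp(c\sqrt{\log n})} \bigr)$, with the same constant $c$ and with $C$ absorbed into the implied big-$O$ constant. Finally, the substitution $u = \sqrt{\log n}$ turns $\frac{\log n}{\exp(c\sqrt{\log n})}$ into $u^2 e^{-cu} \to 0$, confirming that this error is indeed $o(n)$.

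This is a standard short-interval counting argument, so I expect no genuine obstacle; the only point demanding care is the bookkeeping that pins $\Delta$ to exactly the stated order, namely verifying that the two error terms merge into a single $O\bigl( n/\exp(c\sqrt{\log n}) \bigr)$ and that enlarging the implied constant $C$ never forces us to shrink $c$.
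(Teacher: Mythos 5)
Your proposal is correct, and it is exactly the standard short-interval counting argument that the paper implicitly relies on: the corollary is stated right after the Page--Walfisz theorem with no written proof, and the intended derivation is precisely to take $\ell = 4$, $k = 1$, compare $\pi$ at $n$ and $n + \Delta$ with $\Delta \asymp n\log n/\exp\bigl(c\sqrt{\log n}\bigr)$, and observe that the main term $\asymp \Delta/\log n$ dominates the error once the constant $C$ is large. Your bookkeeping (merging the two error terms via $n + \Delta \le 2n$, and checking $u^2 e^{-cu} \to 0$ for the $o(n)$ claim) is sound, so there is nothing to add.
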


\begin{proof}[Proof of \Cref{thm:d+2_d+2}]
    The strategy is to construct for every prime $p \equiv 1 \pmod 4$ a subset $S \subset \F_p^d$ of size $p - d - 2$ containing neither $d + 2$ points being coplanar nor $d + 2$ points being cospherical. To be specific, we shall identify $[n]^d$ as $\{\overline{1}, \overline{2}, \dots, \overline{n}\}^d$ in $\F_p^d$. Starting with a claimed $S$ and considering random translations, a direct first moment calculation shows that there exists $\mathbf{v} \in \F_p^d$ with 
    \[
    \bigl| (S + \mathbf{v}) \cap [n]^d \bigr| \ge \Bigl( \frac{n}{p} \Bigr)^d \cdot |S|. 
    \]
    Due to \Cref{coro:prime}, if we take the smallest prime $p$ with $p > \max \bigl\{ 10(d+1)!, n \bigr\}$ and $p \equiv 1 \pmod 4$, then one can compute that $\bigl| (S + \mathbf{v}) \cap [n]^d \bigr| = n - o(n)$. Since every plane (resp.~sphere) in $[n]^d$ is a plane (resp.~sphere) in $\F_p^d$, the set $(S + \mathbf{v}) \cap [n]^d$ concludes the proof of \Cref{thm:d+2_d+2}. 
    
    Pick polynomials $f_1, \dots, f_d, g, h$ as in \Cref{thm:poly_nice}. Consider the parametric curve 
    \[
    \gamma \colon \bigl\{ t \in \F_p : h(t) \ne 0 \bigr\} \to \F_p^d, \qquad t \mapsto \biggl( \frac{f_1(t)}{h(t)}, \, \frac{f_2(t)}{h(t)}, \, \dots, \, \frac{f_d(t)}{h(t)} \biggr). 
    \]
    We are going to show that the image of $\gamma$ provides a valid construction of $S$. 
    
    Let $\pi \eqdef \{\lambda_1 x_1 + \dots + \lambda_d x_d + \lambda = 0\} \subset \F_p^d$ be an arbitrary plane. If $\gamma(t_0) \in \pi$, then $t_0$ is a zero of the polynomial $P \eqdef \lambda_1 f_1 + \dots + \lambda_d f_d + \lambda h$. It follows from \textsf{degree} and \textsf{independence} in \Cref{thm:poly_nice} that $\deg P \le d + 1$ and $P \ne 0$. Therefore, the number of $t_0$ is upper bounded by $d + 1$. 

    Let $\omega \eqdef \{x_1^2 + \dots + x_d^2 + \mu_1 x_1 + \dots + \mu_d x_d + \mu = 0\} \subset \F_p^d$ be an arbitrary sphere. If $\gamma(t_0) \in \omega$, then \textsf{identity} in \Cref{thm:poly_nice} implies that $t_0$ is a zero of $Q \eqdef g + \mu_1 x_1 + \dots + \mu_d x_d + \mu$. Since \textsf{degree} and \textsf{independence} show that $\deg Q \le d + 1$ and $Q \ne 0$, the number of $t_0$ is upper bounded by $d + 1$. 
    
    We have shown that the image of $\gamma$ contains neither $d + 2$ coplanar points nor $d + 2$ cospherical points in $\F_p$, and we are left to prove that this image has at least $p - d - 2$ distinct points. To see this, noticing $\deg h = d + 1$ and $\bigl| \bigl\{ t \in \F_p : h(t) = 0 \bigr\} \bigr| \le d + 1$, it suffices to show that $\gamma$ has at most one \emph{self-intersection}, which refers to a pair of distinct $t_1, t_2$ with $\gamma(t_1) = \gamma(t_2)$. If this is not the case, then we can find a $(d+2)$-element $T \subset \F_d$ with $h(t) \ne 0 \, (\forall t \in T)$ and $|\gamma(T)| = \bigl| \bigl\{ \gamma(t) : t \in T \bigr\} \bigr| \le d$. So, $\gamma(T)$ is contained in some plane $\{\rho_1 x_1 + \dots + \rho_d x_d + \rho = 0\}$, hence $R \eqdef \rho_1 f_1 + \dots + \rho_d f_d + \rho f$ has $d + 2$ zeros in $\F_p$. However, \textsf{degree} and \textsf{independence} in \Cref{thm:poly_nice} tell us that $\deg R \le d + 1$ and $R \ne 0$, a contradiction. Therefore, the proof of \Cref{thm:d+2_d+2} is complete. 
\end{proof}

\begin{proof}[Proof of \Cref{thm:d+1_d+2}]
    Similar to the proof of \Cref{thm:d+2_d+2}, by embedding $[n]^d$ into $\F_p^d$, \Cref{coro:prime} tells us that it suffices to construct for every prime $p \equiv 1 \pmod 4$ a subset $\widetilde{S} \subset \F_p^d$ of size $\frac{1 - o(1)}{d+1} \cdot p$. 

    Pick polynomials $f_1, \dots, f_d, g, h$ as in \Cref{thm:poly_nice}. Consider the parametric curve 
    \[
    \widetilde{\gamma} \colon \biggl\{ t \in \F_p : t^{-1} \in \Bigl\{ \overline{1}, \overline{2}, \dots, \overline{\bigl\lfloor \tfrac{p-1}{d+1} \bigr\rfloor} \Bigr\}, \, h(t) \ne 0 \biggr\} \to \F_p^d \qquad t \mapsto \biggl( \frac{f_1(t)}{h(t)}, \, \frac{f_2(t)}{h(t)}, \, \dots, \, \frac{f_d(t)}{h(t)} \biggr). 
    \]
    We are going to show that the image of $\gamma$ provides a valid construction of $S$. The same arguments as in the proof of \Cref{thm:d+2_d+2} show that this image, containing at least $\bigl\lfloor \frac{p-1}{d+1} \bigr\rfloor -  d - 2$ distinct points, contains no $d + 2$ cospherical points. We are left to show that no plane intersects it in $d + 1$ points. 
    
    For any $\pi \eqdef \{\lambda_1 x_1 + \dots + \lambda_d x_d + \lambda = 0\} \subset \F_p^d$, if $\widetilde{\gamma}(t_0) \in \pi$, then $t_0$ is a zero of the polynomial $P \eqdef \lambda_1 f_1 + \dots + \lambda_d f_d + \lambda h = 0$, where \textsf{degree}, \textsf{independence} in \Cref{thm:poly_nice}
    show that $\deg P \le d + 1$ and $P \ne 0$. Moreover, \textsf{nice} implies that its linear coefficient vanishes (i.e., $[x]P = 0$). If $P$ has $d + 1$ distinct zeros $t_1, \dots, t_{d+1}$ in $\{\overline{1}, \overline{2}, \dots, \overline{n}\}$, then $[x]P = 0$ and Vieta's theorem tell us that 
    \[
    t_1^{-1} + \dots + t_{d+1}^{-1} = 0. 
    \]
    This implies that such zeros cannot live in the domain of $\widetilde{\gamma}$ simultaneously. Thus, each plane in $\F_p^d$ intersects $\widetilde{\gamma}$ in at most $d$ points. The proof of \Cref{thm:d+1_d+2} is complete. 
\end{proof}

\section*{Acknowledgments}

The authors would like to thank Hong Liu for comments and suggestions to an earlier draft of this manuscript. Part of this work was done during a visit of the first author to Peking University. He is thankful to Chunwei Song for hosting. 

\bibliographystyle{plain}
\bibliography{plane_sphere}

\end{document}